\documentclass[11pt,a4paper,reqno]{amsart}

\setlength{\textwidth}{150mm}
\setlength{\textheight}{206mm}
\setlength{\oddsidemargin}{5mm}
\setlength{\evensidemargin}{5mm}
\usepackage[colorlinks=false]{hyperref}
\usepackage{amsmath, nccmath, amsthm, amsfonts, amssymb, mathtools}
\usepackage{tikz,tikz-cd}
\usepackage[utf8]{inputenc}
\usepackage[T1]{fontenc}
\usepackage[english]{babel}
\usepackage{sourcesanspro}
\usepackage[theoremfont, amsthm]{newtx}
\usepackage{BOONDOX-calo,BOONDOX-ds,BOONDOX-frak}
\usepackage{enumitem}
\usepackage{comment}
\usepackage{cleveref}
\usepackage{xcolor}
\usepackage{soul}
\sethlcolor{yellow!50} 
\theoremstyle{plain}
\newtheorem{theorem}{Theorem}
\newtheorem{proposition}[theorem]{Proposition}
\newtheorem{lemma}[theorem]{Lemma}
\newtheorem{corollary}[theorem]{Corollary}
\newtheorem{definition}[theorem]{Definition}
\theoremstyle{remark}
\newtheorem{remark}[theorem]{Remark}
\newtheorem{example}[theorem]{Example}
\numberwithin{equation}{section}

\newcommand{\al}[1]{\mathbf{#1}}
\newcommand{\var}[1]{\mathcal{#1}}
\newcommand{\vag}[1]{\mathrm{V}\!\left({#1}\right)}
\newcommand{\trop}[1]{\mathrm{#1}}

\newcommand{\hto}{\mathrm{h}}
\newcommand{\bro}{\mathrm{b}}
\newcommand{\meo}{\mathrm{m}}


\newcommand{\acr}{\newline\indent}

\author[V\'aclav Cenker]{V\'aclav Cenker*}
\title{Finitely Generated Varieties of Commutative \linebreak BCK-algebras:  Covers}
\address{\llap{*\,}
Palacký University Olomouc\acr
Faculty of Science\acr
Department of Algebra and Geometry\acr
17. listopadu 1192/12\acr
CZ--779 00 Olomouc\acr
Czech Republic}
\email{vaclav.cenker01@upol.cz}
\keywords{Commutative BCK-algebra, covers in subvariety lattice, subdirectly irreducible algebras}

\thanks{Author acknowledge the support by the Czech Science Foundation (GA\v CR): project 24-14386L}

\begin{document}
\begin{abstract}
The article aims to describe all covers of every finitely generated variety of cBCK-algebras. 
It is known that subdirectly irreducible cBCK-algebras are rooted trees (with respect to their order). 
Moreover, all subdirectly irreducible members of a finitely generated variety are isomorphic to subalgebras of the subdirectly irreducible generators of that variety. 
The first part of the article focuses on subalgebras of finite subdirectly irreducible cBCK-algebras. 
The second and main part presents a construction that provides all the covers of any finitely generated variety, which constitutes the central contribution of the article.
\end{abstract}
\maketitle

\section{Introduction}
BCK-algebras were first introduced in \cite{Isek} as an algebraic semantics for a non-classical logic based solely on implication. Every BCK-algebra admits a natural ordering, and if it satisfies commutativity (which in a BCK-algebra is not the same as standard commutativity of a binary operation), then the underlying poset forms a meet semilattice. For brevity, we will refer to BCK-algebras satisfying this property as cBCK-algebras. In the literature, this commutativity is sometimes called Tanaka’s identity.

Unlike general BCK-algebras, cBCK-algebras form a variety. While our main focus is on cBCK-algebras, it is worth noting that this variety is properly contained within the larger variety of JBCK-algebras, alongside other notable varieties such as the implicative subreducts of hoops or Hilbert algebras. In this context, the “J” in JBCK refers to an identity that generalises Tanaka’s identity; see~\cite{BlokRaf} for details. For further information on the subvarieties of JBCK-algebras, see~\cite{Agliano}.

The class of commutative BCK-algebras is also closely related to MV-algebras. Every bounded cBCK-algebra can be turned into an MV-algebra, and conversely, by taking the implicative subreducts of an MV-algebra, one obtains a cBCK-algebra. This construction yields a more special class of BCK-algebras, known as Łukasiewicz BCK-algebras (\L BCK-algebras). \L BCK-algebras form a subvariety of cBCK-algebras that additionally satisfies prelinearity, and they are equivalently referred to as cBCK-algebras with the relative cancellation property; this terminology is used, for instance, in \cite{DvurBCK}. A notable fact is that every linearly ordered cBCK-algebra is automatically a \L BCK-algebra. For more details on MV-algebras, the reader may consult \cite{CignoliMV}.

Focussing on cBCK-algebras, we first recall some of their fundamental structural properties. The variety of all cBCK-algebras is congruence distributive and 3-permutable, while no non-trivial subvariety is 2-permutable. Finitely generated varieties of cBCK-algebras are semisimple, i.e., any subdirectly irreducible member is simple. Moreover, every finite simple cBCK-algebra is hereditarily simple. Finally, an important structural fact is that subdirectly irreducible cBCK-algebras can be viewed, with respect to their order, as rooted trees \cite{Romanowska, CornishTrees}.

Keeping these structural properties in mind, we now proceed to outline the method used to describe the covers of finitely generated varieties of cBCK-algebras.
Let $\var{V}$ be a finitely generated variety of cBCK-algebras. Then, there exists an irredundant set of finite subdirectly irreducible cBCK-algebras $\al A_1$, \dots $\al A_n$ such that $\var{V} = \vag{\al A_1,  \dots, \al A_n}$. A crucial observation is that the subdirectly irreducible members of $\var{V}$, $\trop{Si}(\var{V})$, consist (up to isomorphism) precisely of the subalgebras of $\al A_1$, \dots, $\al A_n$. Thus,
$$\trop{Si}(\var{V}) = \bigcup_{i=1}^n \trop{IS}(\al A_i).$$
This follows from Jónsson's Lemma together with the fact that $\al A$ is hereditarily simple.

Because subdirectly irreducible members of $\var{V}$ are essentially subalgebras of generators, we first examine subalgebras of finite subdirectly irreducible cBCK-algebras. Given a finite subdirectly irreducible cBCK-algebra $\al A$, subalgebras of $\al A$ can be divided into two types: downsets and others. The latter can be characterised as the set of elements of $\al A$ whose heights are divisible by some $k > 1$, $k \in \mathbb{N}$. Under suitable conditions, such a set indeed forms a subalgebra. A detailed characterisation of these subalgebras constitutes the first part of this article.

We then turn to the central part of the article, which focusses on identifying subdirectly irreducible cBCK-algebras that generate a cover of $\vag{\al A_1, \dots, \al A_n}$. 
The construction formalises a straightforward idea: Given a subdirectly irreducible cBCK-algebra, a tree, one can generate a minimal algebra above it by extending the tree. 
This begins by adding a new leaf to an existing leaf, providing a simple visual guide to the process.

\section{Basic definitions}

A \emph{BCK-algebra} is an algebra $\al A = (A, \ominus , 0)$ of type $(2,0)$ such that the following identities and quasi-identity hold:
\begin{gather}
((x \ominus y) \ominus (x \ominus z)) \ominus (z \ominus y) = 0 \text{,} \label{id: BCK1}\\
x\ominus 0= x \label{id: BCK2}\text{,}\\
0 \ominus x = 0 \label{id: BCK3}\text{,}\\
x \ominus y = 0 \text{ and } y \ominus x = 0 \text{ imply } x = y\text{.}
\end{gather}
In every BCK-algebra, we define an order by
$$x \leq y \text{ if and only if } x \ominus y = 0\text{.}$$
Clearly, the constant $0$ is the bottom element. Further, we write $x \prec y$ to indicate that $y$ covers $x$.

A \emph{commutative BCK-algebra} (\emph{cBCK-algebra} in short) is a BCK-algebra that additionally satisfies the identity
\begin{gather}
x \ominus (x \ominus y) = y \ominus (y \ominus x)\text{.} \label{id: commutativity}
\end{gather}
The class of all cBCK-algebras forms a variety and can be axiomatised by the identities \labelcref{id: BCK2}, \labelcref{id: BCK3}, \labelcref{id: commutativity} and the identity
\begin{align}
(x \ominus y) \ominus z &= (x \ominus z) \ominus y\text{.}\label{id: BCKex}
\end{align}
The identity \labelcref{id: commutativity} makes the order a semilattice, where meet is
$$x \wedge y = x \ominus (x \ominus y)\text{.}$$
It is a straightforward fact, but it may be useful to keep in mind during computations that $x \ominus y = x \ominus (x \wedge y)$.

In any BCK-algebra, we define
\begin{gather*}
x \ominus 1y = x \ominus y\text{,} \quad x\ominus (n+1)y = (x \ominus ny)\ominus y\text{, } \; n \in \mathbb{N}\text{.}
\end{gather*}
For technical purposes, we also define $x \ominus 0y = x$.

A subclass of particular interest is the class of so-called \L BCK-algebras.  
They form a variety axiomatised, relative to cBCK-algebras, by the identity
\begin{align*}
    (x \ominus y) \ominus (y \ominus x) = x \ominus y,
\end{align*}
or equivalently,
$$
(x \ominus y) \wedge (y \ominus x) = 0.
$$
This identity is a form of prelinearity and ensures that subdirectly irreducible members are linearly ordered. Moreover, every cBCK-chain is necessarily a \L BCK-chain.

For the purposes of this article, the most important examples of \L BCK-chains are the finite subalgebras of the \L BCK-chain $(\mathbb{Z}^+, \ominus, 0)$.  
Note that $(\mathbb{Z}^+, \ominus, 0)$ itself is a subalgebra of $(\mathbb{R}^+, \ominus, 0)$,  
where the operation is defined by
$$
x \ominus y = \max\{x - y, 0\}.
$$

The finite subalgebras of $\mathbb{Z}^+$ that are intervals $\al{[0, n]}$ with $n \in \mathbb{N}$ will be denoted by $\al S_n$, that is, $S_n = \{0, 1, \dots, n\}$.

The variety of \L BCK-algebras is closely related to MV-algebras.  
More precisely, every \L BCK-algebra is a subreduct of an MV-algebra, and conversely any bounded \L BCK-algebra can be expanded to an MV-algebra; this holds in particular for finite cBCK-chains.  
Although introducing MV-algebras is not essential for the present paper, mentioning them provides useful context.  
Readers seeking further details, especially on subdirectly irreducible members of this class, may consult \cite{DvurBCK} and \cite{CignoliMV}.

\section{Subdirectly irreducible algebras and their subalgebras}

\subsection{Finite trees are cBCK-algebras}
In what follows, by a tree, we mean a lower semilattice $(A, \wedge, 0)$ with a bottom element $0$, in which any interval $[0,a]$ is a chain. Thus, two elements in the tree have an upper bound if and only if they are comparable.

As mentioned earlier, there is a strong connection between subdirectly irreducible cBCK-algebras and trees. 
We now state this explicitly. 
For more details, see \cite[Theorem~3.1]{Romanowska} 
or its variant in \cite[Theorem~5]{Pal}. 

\begin{theorem}
A cBCK-algebra is subdirectly irreducible if and only if the underlying semilattice is a tree with meet irreducible $0$ and such that every maximal chain is subdirectly irreducible cBCK-chain.
\end{theorem}

Let us remark that if we consider a finite cBCK-algebra, then the condition of $0$ being meet irreducible may be replaced by the condition that the tree has a single atom. 

While in general the BCK-operation of a subdirectly 
irreducible cBCK-algebra is not uniquely determined by its underlying tree semilattice, this does hold in the finite case. 
In fact, \cite{CornishTrees} establishes an even stronger result: Any tree satisfying the descending chain condition determines a unique way of defining the BCK-operation. 
Since we are concerned with finitely generated varieties, 
we only need the following corollary of \cite[Theorem 1.4]{CornishTrees}.

\begin{corollary} \label{cor: trees are cbck}
For any finite tree $(A, \wedge, 0)$ with a single atom, there is, up to isomorphism, a unique subdirectly irreducible cBCK-algebra $\al{A'}$ whose underlying semillatice is isomorphic to $(A, \wedge, 0)$.
\end{corollary}

\begin{remark}
This corollary allows us to present examples of finite subdirectly irreducible cBCK-algebras simply by giving the Hasse diagram of their underlying trees.
\end{remark}

The reader should wonder how the BCK-operation can be defined once the underlying tree is known. Although \cite{CornishTrees} provides an answer, we do not reproduce it verbatim. Instead, we start with finite chains, where the definable BCK-operation arises very naturally, and based on this we then show how to define the BCK-operation on finite trees.

Let us recall a few facts about subdirectly irreducible cBCK-chains. Every cBCK-chain is an \L BCK-algebra. 
Hence, a subdirectly irreducible cBCK-chain is necessarily a subdirectly irreducible \L BCK-chain. 
Moreover, every finite \L BCK-chain is simple, 
and every finite simple \L BCK-algebra is a chain. 
Furthermore, any finite \L BCK-chain is isomorphic to $\al{S}_n$ for some $n \in \mathbb{N}$. 
If $n \neq m$, then $\al{S}_n$ is not isomorphic to $\al{S}_m$. 
Thus, for each finite cBCK-chain $\al{A}$, there exists a unique $n \in \mathbb{N}$ such that $\al{A} \cong \al{S}_n$. All of these results are well established (see~\cite{CornishBCK, Komori}); for an extensive overview of the research on \L BCK-algebras, one may also consult~\cite{DvurBCK}.

With this in mind, we may view finite subdirectly irreducible 
cBCK-algebras simply as trees in which each maximal chain $[0,a]$ 
is (isomorphic to) some $\al S_n$. 
On the other hand, given any finite chain $(C, \wedge, 0)$, 
there is only one way of defining the BCK-operation, 
because having two distinct BCK-operations would result in two cBCK-algebras that (according to the previous paragraph) are isomorphic.

Let $(A, \wedge, 0)$ be a finite tree, and let $(C_i, \wedge, 0)$, $i \in I$, denote all its maximal chains.\footnote{Clearly, the maximal chains cover the tree.} For each $i$, there is a unique way of defining the cBCK-chain $\al C_i$, and there exists an isomorphism $\iota_i \colon \al C_i \to \al S_{n_i}$. To define the BCK-operation on the entire $(A, \wedge, 0)$, we proceed as follows. For any $a,b \in A$, there is an $i$ such that $a \in C_i$, and we define
$$a \ominus b := \iota^{-1}_i(\iota_i(a) \ominus \iota_i (a \wedge b))\text{.}$$
In summary, the guiding idea is quite natural: 
the BCK-operation in a finite tree behaves like truncated subtraction on $\mathbb{Z}^+$. The only adjustment is that, when subtracting incomparable elements, we first project the second element (via the meet) onto the chain determined by the first element.

\subsection{Subalgebras of finite subdirectly irreducible cBCK-algebras}

Recall that any finite subdirectly irreducible cBCK-algebra is hereditarily simple. Thus, by studying its subalgebras, we are in fact examining the subdirectly irreducible members of the variety generated by that cBCK-algebra.

Before proceeding, we introduce and clarify some notions.
Let $\al A$ be a subdirectly irreducible cBCK-algebra. Then:
\begin{itemize}
    \item We use $\prec$ to denote the covering relation, while $<$ denotes the strict order.
    \item $\bro (\al A)$ denotes a set of all branching elements of $\al A$. It holds that $b \in \bro(\al A)$ iff there exist $c$, $d \in A$ such that $c$, $d > b$ and $c \wedge d = b$.
    \item For $a \in A$, $\hto_A(a) = \vert [0, a]\vert -1$ denotes the height of the element $a$. If there is no danger of confusion, we write $\hto(a)$ instead of $\hto_A(a)$. Furthermore, $\hto(\al A) = \sup \{\hto(a) \mid a \in A \}$ denotes the height of $\al A$. 
    \item $a \parallel b$ denotes that $a$ is incomparable to $b$, and $a \not\parallel b$ indicates that $a$ is comparable to $b$.
    \item By the width of $\al A$ we mean the cardinality of a maximal antichain (a maximal set of mutually incomparable elements) of $\al A$.
    \item $\meo (\al A)$ denotes a set of all maximal elements of $\al A$. For $\al A$ with finite height, $\vert \meo(\al A)\vert $ is a width of $\al A$.
\end{itemize}

The following will be used in the proofs without hesitation. 

\begin{lemma}\label{lem: height and minus}
    Let $\al A$ be a finite subdirectly irreducible cBCK-algebra. Let $a$, $b \in A$ and $n \in \mathbb N$. Then,
    \begin{gather*}
        \hto(a \ominus nb) = \max\{\hto(a) - n\hto(a \wedge b),\, 0\}\text{.}
    \end{gather*}
    Moreover, if $e$ is an atom of $\al A$ and $a \neq 0$, then $\hto(a)$ is the least $n \in \mathbb{N}$ such that $a \ominus ne = 0$.
\end{lemma}
\begin{proof}
The first part is borrowed from \cite[Lemma 2.1]{CornishTrees}. The moreover part is obtained as follows.  Let $e \in A$ be an atom of $\al A$. For any $a \in A$, $a \neq 0$, the sequence $(a \ominus n e)_{n \in \mathbb{N}}$ is decreasing and eventually reaches $0$. Therefore, there must exist the least $n \in \mathbb{N}$ for which $a \ominus ne = 0$. Let $n$ denote this natural number. If $\hto(a) > n$, then $\hto(a) - n = \hto(a) - n \hto(e) > 0$. By the first part of the lemma, it follows that $\hto(a \ominus n e) > 0$, which is impossible. Therefore, we must have $\hto(a) \leq n$.  

On the other hand, if $h(a) < n$, then $\hto(a) - n = \hto(a) - n \hto(e) < 0$, so that $\hto(a) - (n-1) \hto(e) \leq 0$, whence $\hto(a \ominus (n-1) e) = 0$, and finally $a \ominus (n-1) e = 0$, since $0$ is the only element of height $0$. This contradicts $n$ being the least and hence $\hto(a) = n$.
\end{proof}

Although the following fact is evident, we state it explicitly, as it will be used repeatedly in the article.

\begin{lemma}
In the finite cBCK-chain, every element is uniquely determined by its height.
\end{lemma}

The first simple fact we can state about finite subdirectly irreducible cBCK-algebras concerns their generators. 
In fact, to generate the algebra, it is necessary to take the maximal elements since the BCK-operation is decreasing. 
Although this is enough in some cases, in general, to ensure that the entire algebra is generated, we also need to include an atom.

\begin{lemma}\label{lem: gen A}
Let $\al A$ be a finite subdirectly irreducible cBCK-algebra. Then $\al A$ is generated by $\meo(\al A) \cup \{a\}$, where $a$ is an atom of $\al A$.
\end{lemma}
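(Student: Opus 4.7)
The plan is to show that for each maximal element $m \in \meo(\al A)$, the principal downset $[0,m]$ is already contained in $\langle\{m,a\}\rangle$. Since $\al A$ is finite and rooted, $\al A = \bigcup_{m \in \meo(\al A)}[0,m]$, so this will give $\al A \subseteq \langle \meo(\al A) \cup \{a\}\rangle$, and the reverse inclusion is immediate.

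First, I would verify that $[0,m]$ is a subalgebra of $\al A$: it contains $0$, and for $x,y \in [0,m]$ we have $x \ominus y \le x \le m$, so $[0,m]$ is closed under $\ominus$. By the structural fact recalled just above the lemma, $[0,m] \cong \al S_n$ where $n = \hto(m)$. Because $\al A$ is a rooted tree with a single atom $a$, every non-zero element of $\al A$ lies above $a$; in particular $a \in [0,m]$, and being the unique atom of the chain $[0,m]$ it must correspond to $1 \in S_n$ under the isomorphism $[0,m] \cong \al S_n$ (while $m$ corresponds to $n$).

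Computing in $\al S_n$ with $x \ominus y = \max\{0, x-y\}$ then gives $m \ominus ka = \max\{0, n-k\}$ for every $k \in \mathbb{N}$; letting $k$ range from $0$ to $n$ recovers all of $[0,m]$. Hence $[0,m] \subseteq \langle\{m,a\}\rangle \subseteq \langle \meo(\al A) \cup \{a\}\rangle$, and taking the union over $m \in \meo(\al A)$ completes the proof. There is no substantial obstacle in the argument; the only point that merits care is the identification of the atom $a$ of $\al A$ with the atom of each chain $[0,m]$, which follows at once from the uniqueness of the atom in the rooted tree $\al A$.
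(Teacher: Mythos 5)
Your proof is correct and follows essentially the same route as the paper: every element lies below some maximal element $m$, and the chain $[0,m]\cong\al S_n$ is recovered as $m\ominus ka$ for $k=0,\dots,n$. You merely spell out in more detail the identification of the atom of $\al A$ with the atom of each maximal chain, which the paper leaves implicit.
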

\begin{proof}
Let $\al A$ be a finite chain. Let $a$ be an atom of $\al A$ and $c$ the maximal element of $\al A$, then for any $b \in A$
$$\hto(b) =\hto(c) - \big(\hto(c) - \hto(b)\big)\hto(a).$$
By the Lemma \labelcref{lem: height and minus}, we have
$\hto(b) = \hto(c \ominus \big(\hto(c) - \hto(b)\big) a)$. Since each element is uniquely determined by its height it follows that $b= c \ominus \big(\hto(c) - \hto(b)\big) a$.

Now, let $\al A$ be a finite subdirectly irreducible cBCK-algebra. Let $a$ be an atom. For any $b \in \al A$, there exists $c \in \meo(\al A)$ such that $b \in [0, c]$. From the first paragraph, it follows that $b = c \ominus k a$ for some $k \in \mathbb{N} \cup \{ 0 \}$.
\end{proof}

The fact that $\meo(\al A)$ may not suffice to generate the entire $\al A$ is closely related to the description of its subalgebras. If $\meo(\al A)$ is not enough -- that is, if the generating process does not reach an atom of $\al A$ -- then the subalgebra generated by $\meo(\al A)$ is not a downset of $\al A$.  This suggests that besides the downset subalgebras, there exist other kind of subalgebras as well. We now provide a characterisation of these subalgebras.

Let $\al A$ be a finite subdirectly irreducible cBCK-algebra. Let $k \in \mathbb{N}$. Denote
$$
A_k = \{a \in A \mid k \text{ divides } \hto(a)\}\text{.}
$$
If $\al A$ is a chain, then $A_k$ forms the universe of a subalgebra of $\al A$ for any $A_k$. 
However, if $\al A$ is not a chain, $A_k$ does not need to be the universe of a subalgebra. 
As we shall see, this depends on whether $\bro(\al A) \subseteq A_k$. 
In fact, we will rather consider $A_k \cup \meo(\al A)$ instead of $A_k$ alone, 
which allows for slightly better handling later on.

\begin{remark}
If $\al A$ has height $n$ and $k > n$, then $A_k$ is, by definition, just $\{0\}$. On the other hand, if $k \leq n$, then $A_k$ contains at least two elements. Furthermore, if $k = 1$, then, of course, $A_k = A$.
\end{remark}

\begin{figure}[t]
    \centering
    \includegraphics[height = .18\textheight]{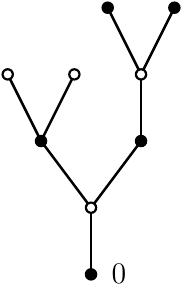}
    \caption{A subdirectly irreducible cBCK-algebra and the set $A_2$ (marked with $\bullet$).}
    \label{fig: 3}
\end{figure}

In the following, $\gcd(k, l)$ denotes the greatest common divisor of $k$, $l \in \mathbb{Z}$.

\begin{lemma}\label{lem: gcd}
Let $\al C$ be a finite cBCK-chain. Let $a$, $b \in C$ be such that $\gcd(\hto(a), \hto(b)) = d$. Then there are a BCK-term $t(x,y)$ and a unique element $e$ such that $t(a,b)= e$ and $\hto(e)= d$.
\end{lemma}
\begin{proof}

For $a = 0$ or $b = 0$, the lemma obviously holds. Therefore, assume that $a \neq 0$ and $b \neq 0$.

Since in a finite cBCK-chain $\al C$ each element is uniquely determined by its height, and every $n \in \mathbb{N}$ with $n \le \hto(\al C)$ is the height of some element, it follows that there exists a unique element $e \in C$ such that $\hto(e) = d$.

Without loss of generality, assume $\hto(a) > \hto(b)$. To construct a term $t(x,y)$, consider the Euclidean algorithm applied to $\hto(a)$ and $\hto(b)$. The algorithm corresponds to the following equalities:
\begin{align*}
    \hto(a) - q_0 \hto(b) &= r_0,\\
    \hto(b) - q_1 r_0 &= r_1,\\
    &\vdots\\
    r_{n-2} - q_n r_{n-1} &= r_n = d,
\end{align*}
where $q_0, \dots, q_n$ and $r_0, \dots, r_n$ are all positive integers.

Since $r_0 > 0$, we have
$$r_0 = \hto(a) - q_0 \hto(b) = \hto(a \ominus q_0 b).$$
Abbreviate $c_0(a,b) = a \ominus q_0 b$. Then, the second line of the algorithm gives
$$r_1 = \hto(b) - q_1 \hto(c_0(a,b)) = \hto(b \ominus q_1 c_0(a,b)),$$
and we denote $c_1(a,b) = b \ominus q_1 c_0(a,b)$.  

Clearly, the same applies to every subsequent line. In the final line, we obtain
$$d = \hto(e) = \hto(c_{n-2}(a,b) \ominus q_n c_{n-1}(a,b)).$$
Thus, we can define
$$t(x,y) = c_{n-2}(x,y) \ominus q_n c_{n-1}(x,y).$$
As explained earlier, $e$ is uniquely determined by its height. Therefore, $e = t(a,b)$.
\end{proof}

\begin{proposition}\label{prop: Ak def sub}
Let $\al A$ be a finite subdirectly irreducible cBCK-algebra. Let $A_k$ has the same meaning as above, and let $A_k \neq \{0\}$. Then $ A_k \cup \meo(\al A)$ is a universe of a subalgebra of $\al A$ if and only if $\bro(\al A)$, $ \meo(\al A) \subseteq A_k$.
\end{proposition}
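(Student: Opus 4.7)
The plan is to leverage the identity $x \ominus y = x \ominus (x \wedge y)$, valid in every cBCK-algebra: by definition $x \wedge y = x \ominus (x \ominus y)$, so setting $u := x \ominus y \le x$ gives $x \ominus (x \wedge y) = x \ominus (x \ominus u) = u \ominus (u \ominus x) = u \ominus 0 = u$, using \labelcref{id: commutativity} in the middle step. Since each interval $[0,m]$ with $m \in \meo(\al A)$ is a chain, $x \ominus y$ is the unique element of $[0, x]$ at height $\hto(x) - \hto(x \wedge y)$. I will use two auxiliary facts: (i) if $x \parallel y$ and $c := x \wedge y$, then $c \in \bro(\al A)$, because the first step up from $c$ toward $x$ and the first step up from $c$ toward $y$ are distinct covers of $c$; and (ii) if $x, y > 0$, then $x \wedge y \geq a > 0$, since $\al A$ has a unique atom. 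Direction $(\Leftarrow)$ then follows immediately: under $\bro(\al A), \meo(\al A) \subset A_k$ we have $A_k \cup \meo(\al A) = A_k$, and for any $x, y \in A_k$ the meet $x \wedge y$ is itself in $A_k$ (equal to $x$, $y$, or, when $x \parallel y$, an element of $\bro(\al A)$ by (i)), so $\hto(x \ominus y) \equiv 0 \pmod{k}$, and $x \ominus y \in A_k$.

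For the direction $(\Rightarrow)$, assume $B := A_k \cup \meo(\al A)$ is closed under $\ominus$. The driving observation is: if $m \in \meo(\al A)$ and $z \in B$ satisfy $m \wedge z > 0$ and $m \not\le z$, then $m \ominus z$ lies strictly between $0$ and $m$ in $[0, m]$, hence is not maximal in $\al A$ and must lie in $A_k$. To show $\meo(\al A) \subset A_k$, fix $m \in \meo(\al A)$. If $\hto(m) \ge k$, pick $z \in [0, m]$ at height $k$; then $z \in A_k \subset B$ and $k \mid \hto(m \ominus z) = \hto(m) - k$, forcing $k \mid \hto(m)$. If $\hto(m) < k$, then $\al A$ cannot be a chain (otherwise $\hto(m) = n$ and $k \mid n$), so some other maximal $m^*$ exists; with $c := m \wedge m^* \ge a > 0$ by (ii), the element $m \ominus m^* = m \ominus c$ is non-maximal and nonzero, so $k \mid \hto(m) - \hto(c)$. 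But $0 < \hto(m) - \hto(c) < k$ (since $\hto(c) \ge 1$, $\hto(m) < k$, and $c < m$) forces $\hto(m) = \hto(c)$, i.e.\ $m = c \le m^*$, contradicting that $m, m^*$ are distinct maximals.

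Having established $\meo(\al A) \subset A_k$, one handles $\bro(\al A) \subset A_k$ as follows: for $b \in \bro(\al A)$, pick $c, d > b$ with $c \wedge d = b$ and extend to maximals $m_1 \ge c$, $m_2 \ge d$; a short tree-theoretic argument (the chains from $b$ up to $m_1$ and to $m_2$ diverge at $b$) gives $m_1 \wedge m_2 = b$. Now $m_1 \in A_k$ by the previous step, and $m_1 \ominus m_2 = m_1 \ominus b$ is a non-maximal element of $[0, m_1]$ and hence in $A_k$, so $k \mid \hto(m_1) - \hto(b)$; combined with $k \mid \hto(m_1)$ this yields $b \in A_k$. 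The main obstacle is the sub-case $\hto(m) < k$ in the proof of $\meo(\al A) \subset A_k$: no element of positive height sits in $[0, m] \cap A_k$ with which to probe $m$, so one must invoke a second maximal and use the unique-atom property (ii) to rule out the trivial outcome $m \ominus m^* = m$.
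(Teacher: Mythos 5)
Your proof is correct and follows essentially the same route as the paper: both directions rest on the identity $x \ominus y = x \ominus (x \wedge y)$, the fact that meets of incomparable elements are branching, and the resulting height arithmetic modulo $k$. The only difference is organizational: in the forward direction the paper first obtains $\bro(\al A) \subset A_k$ from closure under meets of sets of maximal elements and then deduces $\meo(\al A) \subset A_k$ by comparing against a maximal element of full height, whereas you establish $\meo(\al A) \subset A_k$ first (probing with an element of height $k$, or with a second maximal element in the sub-case $\hto(m) < k$) and then derive $\bro(\al A) \subset A_k$ from a pair of maximal elements meeting at the branching point.
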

\begin{proof}
($\Rightarrow$) Let $A_k \cup \meo(\al A)$ be the universe of a subalgebra of $\al A$. Let $M \subseteq \meo(\al A)$ be any set of maximal elements. Then, $\bigwedge_{m \in M} m \in A_k \cup \meo(\al A)$. Clearly, for any $b \in \bro(\al A)$, we can find $M \subseteq \meo(\al A)$ such that $b = \bigwedge_{m \in M} m$. One possible choice is $M = \{ m \in \meo(\al A) \mid m > b \}$. Therefore, since $A_k \cup \meo (\al A)$ is closed under $\ominus$ and hence under $\wedge$, we have $\bro(\al A) \subseteq A_k \cup \meo(\al A)$. Moreover, for any $b \in \bro(\al A)$, we have $b \notin \meo(\al A)$. It follows that $\bro(\al A) \subseteq A_k$.

Assume that $\meo(\al A) \not\subseteq A_k$. Suppose first that $\al A$ is a chain. Then $\meo(\al A)$ is a singleton $\{m\}$ with $m \notin A_k$. Furthermore, since $A_k \neq \{0\}$, there exists $a \in A_k$, $a \neq 0$. By Lemma \labelcref{lem: gcd}, the algebra $A_k \cup \meo(\al A)$ should include an element $e$ with height $\gcd(\hto(m), \hto(a)) = d$. However, $d$ cannot be divisible by $k$ because $\hto(m)$ is not divisible by $k$. Moreover, $e \neq m$. Consequently, $e \notin A_k \cup \meo(\al A)$, which is a contradiction.

Now, suppose that $\al A$ is not a chain. Then $\bro(\al A) \neq \emptyset$. Therefore, each maximal chain $[0, m]$, $m \in \meo(\al A)$, includes at least one branching element (for example, the smallest). Let $m \in \meo(\al A)$ be such that $m \notin A_k$, and let $\al{[0, m]}$ be the chain subalgebra of $\al{A_k \cup \meo(\al A)}$. There is some branching element $b$ such that $b \in [0, m]$. From the first paragraph of the proof, we know that $\hto(b)$ is divisible by $k$. Therefore, by an argument similar to the previous paragraph, the algebra $\al{[0, m]}$ contains an element $e$ of height $\gcd(\hto(b), \hto(m)) = d$. Since $d$ cannot be divisible by $k$, we obtain a contradiction.

($\Leftarrow$) Let $\bro(\al A), \meo(\al A) \subseteq A_k$. Let $a$ and $b$ be arbitrary elements of $A_k$. If $b = 0$, then $a \ominus b = a \in A_k$. Furthermore, if $a \leq b$, then $a \ominus b = 0 \in A_k$. Hence, we may assume $b \neq 0$ and $a \ominus b \neq 0$.
Moreover, assume that $a \not\parallel b$ (so that $a > b$). Since $a, b \in A_k$, we have $\hto(a) = kp$ and $\hto(b) = kq$ for suitable $p, q \in \mathbb{N}$. It follows that $\hto(a \ominus b) = kp - kq = k(p-q)$. Therefore, $a \ominus b \in A_k$.
Now, consider the case of $a \parallel b$. Then, $a \ominus b = a \ominus (a \wedge b)$, where $a \wedge b \in \bro(\al A) \subset A_k$. Here, $a$ and $a \wedge b$ are comparable and both belong to $A_k$. Using the same argument as above, $a \ominus b \in A_k$. Consequently, $A_k$ is closed under $\ominus$.
\end{proof}

Let $\al A$ be a finite subdirectly irreducible cBCK-algebra.
We denote by $\trop S_{\delta}(\al A)$ the set of subalgebras of the form $A_k \cup \meo(\al A)$, where $k \neq 1$.
Furthermore, let $\trop S_d(\al A)$ denote the subalgebras of $\al A$ that are downsets.

\begin{proposition}\label{prop: gcdab}
Let $\al A$ be a finite subdirectly irreducible cBCK-algebra. Let $\al B \in \trop S(\al A)$. If there exists $a$, $b \in \al B$ such that $\gcd (\hto_A(a), \hto_A(b)) = 1$, then $\al B \in \trop S_d(\al A)$.
\end{proposition}
\begin{proof}
Let $\gcd (\hto_A(a), \hto_A(b)) = 1$. Assume that $a > b$ or $a < b$. Then, by Lemma \labelcref{lem: gcd}, there exists a term $t(x,y)$ such that $t(a,b)$ is an atom of $A$.

If, on the other hand, $a \parallel b$, then $a \wedge b < a, b$. Now, if $\gcd(\hto(a \wedge b), \hto(a))= 1$ or $\gcd(\hto(a \wedge b), \hto(b))= 1$, we may use the same argument as before to obtain an atom. Thus, assume the opposite, i.e., $\gcd(\hto(a \wedge b), \hto(a))= d_1$ and $\gcd(\hto(a \wedge b), \hto(b))= d_2$, where $d_1 \neq 1$ and $d_2 \neq 1$. It follows that $\gcd(d_1, d_2) =1$, for otherwise $\gcd(\hto(a), \hto(b)) \neq 1$, which contradicts the premise.  

Since $a$, $b$, $a \wedge b \in B$, Lemma \labelcref{lem: gcd} provides elements $e,f \in B$ with $\hto(e)=d_1$ and $\hto(f)=d_2$.  
Moreover, because $e,f \le a \wedge b$, it holds $e \not\parallel f$.  
Applying Lemma \labelcref{lem: gcd} once again, we obtain a term $t(x,y)$ such that $t(e,f)$ is an atom of $A$.

So, in any case, $\gcd(\hto_A(a), \hto_A(b)) = 1$ implies that an atom of $\al A$ is also in $\al B$. Let $e$ be an atom of $\al A$. Given $c \in B$ and $d \in A$ such that $d \leq c$, we have $d = c \ominus k e$ for some $k \in \mathbb{Z}^+$. Consequently, $\al B$ is a downset of $\al A$.
\end{proof}

\begin{theorem} \label{thm: sub of A}
Let $\al A$ be a finite subdirectly irreducible cBCK-algebra. Then, (up to isomorphism) the following holds 
$$\trop{S}(\al A) = \trop S_d(\al A) \cup \trop S_\delta(\trop S_d (\al A))\text{.}$$ 
Moreover, if $\trop S_\delta(\trop S_d(\al A))$ consists only of chains, then $\trop{IS}(\al A) = \trop{IS}_d(\al A)$.
\end{theorem}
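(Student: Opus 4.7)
The plan is to stratify an arbitrary $\al B \in \trop{S}(\al A)$ by intersecting it with each maximal chain of $\al A$. First, if $\al B$ is itself a chain then $\al B \cong \al S_p$ for some $p \leq \hto(\al A)$, and any principal downset $[0,a]$ of $\al A$ with $\hto(a) = p$ is isomorphic to $\al B$, so $\al B$ already lies in $\trop{S}_d(\al A)$ up to isomorphism. Hence from now on I may assume $\meo(\al B)$ contains at least two elements.

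For each $m \in \meo(\al B)$ the set $\al B \cap [0,m]_{\al A}$ is a subalgebra of the chain $[0,m] \cong \al S_{\hto(m)}$, and a short direct calculation shows that any such subalgebra has the form $\{0, k_m, 2k_m, \dots, \hto(m)\}$ for a divisor $k_m$ of $\hto(m)$. The main technical point --- and the step I expect to be the key obstacle --- is showing that the $k_m$ are independent of $m$. For two maxima $m_1, m_2 \in \meo(\al B)$ the meet $c = m_1 \wedge m_2$ belongs to $\al B$ by meet-closedness, and because $\al A$ has a unique atom we have $c > 0$; since $c$ is a non-top element of both $\al B \cap [0,m_i]$, comparing the two arithmetic progressions on the shared prefix $[0,c]$ forces $k_{m_1} = k_{m_2}$. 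Let $k$ denote this common value.

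If $k = 1$ then $\al B \cap [0,m] = [0,m]$ for every $m \in \meo(\al B)$, so $\al B = \bigcup_{m \in \meo(\al B)} [0,m]$ is a downset and lies in $\trop{S}_d(\al A)$. If $k > 1$, I take $\al C := \bigcup_{m \in \meo(\al B)} [0,m]_{\al A}$, a downset subalgebra of $\al A$ with $\meo(\al C) = \meo(\al B)$; the chain analysis above directly gives $\al B = C_k \cup \meo(\al C)$, hence $\al B \in \trop{S}_\delta(\al C) \subseteq \trop{S}_\delta(\trop{S}_d(\al A))$. The divisor condition $k < \hto(\al C)$ needed for $k \in \delta^{*}(\hto(\al C))$ holds because $k = \hto(\al C)$ would force every maximal element of $\al B$ to have height $k$ and $\al B \setminus \{0\} = \meo(\al B)$, making $m_1 \wedge m_2 = 0$ for distinct maxima and once more contradicting the single-atom property. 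Finally, the ``moreover'' clause is immediate: if every member of $\trop{S}_\delta(\trop{S}_d(\al A))$ is a chain, then (exactly as in the opening paragraph) each such algebra is isomorphic to a principal downset of $\al A$, so $\trop{S}_\delta(\trop{S}_d(\al A)) \subseteq \trop{S}_d(\al A)$ up to isomorphism, and the claimed equality follows.
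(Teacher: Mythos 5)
Your proof is correct, but it takes a genuinely different route from the paper's. The paper reduces everything to \Cref{lem: gcdab}: if $\al B$ contains two elements of coprime height, the Euclidean algorithm (run inside $\al B$ via repeated $\ominus$) produces the atom of $\al A$, forcing $\al B$ to be a downset; otherwise all heights in $B$ share a common divisor $k>1$ and $B$ is claimed to equal $C_k$ for a suitable downset $\al C$. You instead stratify $\al B$ along the maximal chains of $\al A$, use the classification of subalgebras of a finite chain $\al S_n$ as arithmetic progressions $\{0,k,2k,\dots\}$, and then prove that the common difference is uniform across branches by comparing the two progressions on the shared prefix below $m_1 \wedge m_2$ (which is nonzero by the single-atom property). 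The paper's argument is shorter but leaves several points implicit -- in particular the uniformity of $k$ over all pairs and the verification that $B$ is exactly $C_k$ for a downset $\al C$ with $k \in \delta^{*}(\hto(\al C))$; your version makes these explicit, including the check that $k$ properly divides $\hto(\al C)$, which is a detail the paper does not address. The trade-off is that you need the (true, easily verified) classification of subalgebras of $\al S_n$ as a black box, whereas the paper's Euclidean-algorithm lemma is proved in full and is reused conceptually elsewhere. Both arguments are sound; yours is arguably the more self-contained of the two.
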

\begin{proof}
The inclusion 
$$S(\al A) \supseteq \trop S_d(\al A) \cup \trop S_\delta(\trop S_d (\al A))$$ 
is evident. Let us prove the opposite inclusion. Let $\al B \in \trop S(\al A)$. We will show the implication
$$\al B \notin \trop S_\delta(\trop S_d (\al A)) \Rightarrow \al B \in \trop S_d(\al A).$$
This is sufficient, since it is then impossible for $\al B$ to lie outside both $\trop S_\delta(\trop S_d(\al A))$ and $\trop S_d(\al A)$.

Assume $\al B \notin \trop S_\delta(\trop S_d(\al A))$. 
If $\gcd(\hto_A(a), \hto_A(b)) = 1$ for some $a, b \in B$, then, by Proposition \labelcref{prop: gcdab}, $\al B \in \trop S_d(\al A)$. In this case, it also follows that $\gcd(\hto_A(b) \mid b \in B) = 1$. Otherwise, we must have $\gcd(\hto_A(b) \mid b \in B) = k > 1$, which implies $B = D_k$ for some $\al D \in \trop S_d(\al A)$. This is impossible since we assumed $\al B \notin \trop S_\delta(\trop S_d(\al A))$.

To prove the moreover part, let $\hto(\al A) = n$ and assume that $\trop S_\delta(\trop S_d(\al A))$ contains only chains. For any $\al C \in \trop S_\delta(\trop S_d(\al A))$, we have $\hto(\al C) = k \le n$. Since for every $i \in \{0, \dots, n\}$ there exists $a \in A$ with $\hto(a) = i$, there exists $a \in A$ such that $\hto(a) = k$. Then $\al{[0,a]_A} \in \trop S_d(\al A)$ is a finite chain of height $k$, as is $\al C$, so $\al{[0,a]_A} \cong \al C$.
\end{proof}

\begin{remark}
The opposite implication of the moreover part is not true, i.e. there are algebras $\al A$ such that $\trop{IS}(\al A) = \trop{IS}_d(\al A)$ and $\trop S_\delta(\trop S_d(\al A))$ does not consist of chains only. A simple counterexample is shown in Figure \labelcref{fig: example1}. 

\end{remark}

\begin{figure}[b]
    \centering
    \includegraphics[height= .18\textheight]{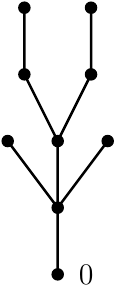}
    \caption{Example of cBCK-algebra satisfying $\trop{IS}(\al A) = \trop{IS}_d(\al A)$.}
    \label{fig: example1}
\end{figure}

Using $\trop S_\delta$ it is possible to provide a simple characterisation of the generators of $\al A$.
\begin{proposition}
Let $\al A$ be a finite subdirectly irreducible cBCK-algebra. Then, the smallest set generating $\al A$ is $ \meo(\al A)$ if and only if $\trop S_\delta(\al A) = \emptyset$.
\end{proposition}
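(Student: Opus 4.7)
The plan is to reduce the biconditional to the single question: \emph{does $\meo(\al A)$ by itself generate $\al A$?} The key preliminary observation is that every generating set of $\al A$ must contain $\meo(\al A)$. This follows from a straightforward structural induction: since $x \ominus y \leq x$ in any BCK-algebra, every element produced by a BCK-term from inputs $x_1, \dots, x_k$ lies below some $x_i$. Applied to a generating set $X$, any $m \in \meo(\al A)$ must lie below some $x \in X$, but maximality of $m$ forces $m = x \in X$. So $\meo(\al A)$ is a lower bound, under inclusion, for every generating set, and therefore it is the smallest generating set of $\al A$ exactly when it generates $\al A$.

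For the $(\Rightarrow)$ direction (stated contrapositively), I assume $\trop S_\delta(\al A) \neq \emptyset$ and pick $k \in \delta^*(\hto(\al A))$ so that $\al B_0 = A_k \cup \meo(\al A)$ is a subalgebra of $\al A$. Let $a$ be the (unique) atom of $\al A$. Because $\al A$ is not a chain, $a$ is not maximal, so $a \notin \meo(\al A)$; and $\hto(a) = 1$ is not divisible by $k \geq 2$, so $a \notin A_k$. Hence $a \notin \al B_0$, and $\meo(\al A) \subseteq \al B_0 \subsetneq \al A$ shows that $\meo(\al A)$ does not generate $\al A$.

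For the $(\Leftarrow)$ direction, I assume $\trop S_\delta(\al A) = \emptyset$ and set $\al B = \langle \meo(\al A) \rangle$. The classification theorem places $\al B$ in $\trop S_d(\al A)$ or in $\trop S_\delta(\trop S_d(\al A))$. In the first case $\al B$ is a downset of $\al A$ containing every maximal element, so $\al B = \al A$. In the second case $\al B = C_k \cup \meo(\al C)$ for some $\al C \in \trop S_d(\al A)$ and some $k \in \delta^*(\hto(\al C))$; the chain of inclusions $\meo(\al A) \subseteq \al B \subseteq \al C$ together with $\al C$ being a downset of $\al A$ forces $\al C = \al A$, so $\al B = A_k \cup \meo(\al A) \in \trop S_\delta(\al A)$, contradicting the hypothesis. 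Either way $\al B = \al A$. The only genuinely nontrivial step in the whole argument is the lower-bound observation in the first paragraph; once it is in hand, the classification theorem does essentially all the remaining work, with the atom serving as the canonical witness that $\meo(\al A)$ cannot reach past any member of $\trop S_\delta(\al A)$.
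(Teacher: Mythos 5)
Your proof is correct and follows essentially the same route as the paper: the forward direction exhibits a member of $\trop S_\delta(\al A)$ as a proper subalgebra containing $\meo(\al A)$, and the backward direction runs the subalgebra generated by $\meo(\al A)$ through the classification theorem. Your explicit preliminary observation that every generating set must contain $\meo(\al A)$ (via $x \ominus y \leq x$), and your use of the atom as a concrete witness that $A_k \cup \meo(\al A)$ is proper, are details the paper leaves implicit, but they do not change the argument.
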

\begin{proof}
($\Rightarrow$) Suppose that $\meo(\al A)$ generates $\al A$.  
Then there exists a BCK-term $t(x_1, \dots, x_n)$ such that $t(m_1, \dots, m_n) = a$, where $m_1, \dots, m_n \in \meo(\al A)$ and $a$ is an atom of $\al A$.  
Now, let $\al B \in S_\delta(\al A)$. By the definition of $\trop S_\delta(\al A)$, this means that $B = A_k \cup \meo(\al A)$ for some $k > 1$. Since $\meo(\al A) \subseteq B$, it follows that $a \in B$.  
However, $\al B$ cannot contain $a$ because $\hto_A(a) = 1$.

($\Leftarrow$) Let $S_\delta(\al A) = \emptyset$.  
Let $\al B$ be the subalgebra generated by the set $\meo(\al A)$. Then $\al B \in \trop S(\al A)$.  
By Theorem \labelcref{thm: sub of A}, we have $\al B \in \trop S_d(\al A)$ or $\al B \in \trop S_\delta(\trop S_d(\al A))$.  
Since $\meo(\al A) \subseteq \al B$, if $\al B \in \trop S_\delta(\trop S_d(\al A))$, it would follow that $\al B \in \trop S_\delta(\al A)$, contradicting the assumption that $S_\delta(\al A) = \emptyset$.  
Hence, $\al B \in \trop S_d(\al A)$.  
Clearly, $\al B$ is not trivial. Therefore, $\al B$ contains an atom of $\al A$. Since $\al B$ contains an atom of $\al A$ and $\meo(\al A)$, by Lemma \labelcref{lem: gen A}, we conclude that $\al B = \al A$.  

It is also evident that no element of $\meo(\al A)$ can be omitted, as $\ominus$ is decreasing. We conclude that $\meo(\al A)$ is the smallest set generating $\al A$.
\end{proof}

\section{Covers of finitely generated varieties}
Let us first clarify the following fact: for any subdirectly irreducible cBCK-algebra $\al A$ of infinite height, the variety $\vag{\al A}$ contains all finite chains $\al S_n$, $n \in \mathbb{N}$. Indeed, $\al A$ contains an infinite \L BCK-chain as a subalgebra, and any infinite \L BCK-chain generates the entire variety of \L BCK-algebras. Hence, all finite cBCK-chains are necessarily contained in $\vag{\al A}$.

\begin{proposition}\label{prop: cover is fin gen}
    Any cover of a finitely generated variety of cBCK-algebras is finitely generated.
\end{proposition}

\begin{proof}

Let $\var{V}$ be a finitely generated variety of cBCK-algebras, and let $\var{K}$ be a variety that is not finitely generated.  
We prove the contraposition, i.e., that $\var{K}$ cannot be a cover of $\var{V}$.  

Since $\var{V}$ is finitely generated, there exists some $n \in \mathbb{N}$ such that $\hto(\al A) \leq n$ for every $\al A \in \trop{Si}(\var{V})$.  
If $\var{K}$ contains a subdirectly irreducible algebra of infinite height, then it also contains $\al S_{n+1}$.  
It follows that $\var{V} \subsetneq \var{V} \vee \vag{\al S_{n+1}} \subseteq \var{K}$, which contradicts the assumption that $\var{K}$ is a cover of $\var{V}$.  
Therefore, $\var{K}$ cannot contain a subdirectly irreducible algebra of infinite height.

Clearly, if $\var{K}$ has infinitely many irredundant generators, then it cannot be a cover, since one can select a subset of these generators to form a variety that is strictly smaller than $\var{K}$ but strictly larger than $\vag{\al A}$.

It remains to investigate the case where $\var{K}$ contains an infinite subdirectly irreducible cBCK-algebra of finite height. 
Such an algebra must have infinitely many incomparable elements; that is, its width is infinite. 
Let us denote this algebra by $\al B$. 
Similarly to the previous paragraph, there exists $k \in \mathbb{N}$ such that $|m(\al A)| \leq k$ for every \hbox{$\al A \in \trop{Si}(\var{V})$.} 
Now, choose $k+1$ incomparable elements in $\al B$ to generate a subalgebra $\al C$. 
The algebra $\al C$ is finite, yet $\al C \notin \var{V}$. 
Hence, $\var{V} \subsetneq \var{V} \vee \vag{\al C} \subsetneq \var{K}$.
\end{proof}

The previous result can be understood through the structural properties of subdirectly irreducible cBCK-algebras.  
Alternatively, it can also be seen from the existence of certain identities that bound the height and width of the subdirectly irreducible algebras.  
We present these identities here for the reader's insight, even though they are not strictly necessary for the argument.

The following is a restated version of \cite[Theorem 2.1]{Romanowska} and \cite[Proposition 3]{PalRom}.

\begin{samepage}
\begin{theorem}
Let $\al A$ be a subdirectly irreducible cBCK-algebra. Then, $\al A$ satisfies
$$x \ominus (n+1) y = x \ominus ny\text{,}$$
iff $\hto(\al A) \leq n$. Further, $\al A$ satisfies
$$ \bigwedge_{1 \leq i \neq j \leq n} (x_i \ominus x_j) = 0\text{,}$$
iff width of $\al A$ is $\leq n$. If $\al A$ has a finite height, then it is equivalent to $\vert \meo (\al A)\vert \leq n$.
\end{theorem}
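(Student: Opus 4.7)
The theorem breaks into two identity-to-order correspondences, one bounding height and one bounding width, and both will lean on the tree structure of finite-height subdirectly irreducible cBCK-algebras together with the formula $x \ominus y = x \ominus (x \wedge y)$; since $[0,x]$ is a chain this gives $\hto(x \ominus y) = \hto(x) - \hto(x \wedge y)$ whenever $x \ominus y > 0$.

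For the height identity my plan is first to observe that if $z, y > 0$ then $\phi_y(z) := z \ominus y$ satisfies $\hto(\phi_y(z)) \leq \hto(z) - 1$. Indeed, because $\al A$ has a unique atom $a$, any two positive elements meet above $a$, so $\hto(z \wedge y) \geq 1$. Iterating, $x \ominus ky = 0$ as soon as $k \geq \hto(x)$, and since $0 \ominus y = 0$ the sequence is absorbed; hence $\hto(\al A) \leq n$ forces the identity. For the converse I would exhibit failure along a maximal chain of length exceeding $n$: taking $x$ at its top and $y$ equal to the atom $a$ makes $x \ominus n a$ strictly above $0$, while $x \ominus (n+1) a$ lies one further step lower, so the two sides differ.

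For the width identity the key observation is again the single-atom one: a finite meet of positive elements in $\al A$ is $\geq a > 0$. Consequently $\bigwedge_{i \ne j}(x_i \ominus x_j) = 0$ at a particular tuple forces some term $x_i \ominus x_j$ already to equal $0$, equivalently the tuple must contain a comparable pair. So the identity holds on all tuples exactly when no tuple of the relevant size can be pairwise incomparable, which is precisely the width bound. The closing sentence is routine: under finite height, every element lies below a maximal one, so any antichain can be pushed up into $\meo(\al A)$, and the width coincides with $|\meo(\al A)|$.

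The main obstacle I foresee is the step ``a finite meet of positive elements is positive'' in Part 2, which really needs the single-atom property; this is where the assumption that $\al A$ is subdirectly irreducible is decisive, and without it the forward direction of the width characterization would collapse.
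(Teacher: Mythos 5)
First, note that the paper does not prove this theorem at all: it is quoted as a known result with the citation to Palasi\'nski--Romanowska, so there is no in-paper argument to compare yours against and it has to stand on its own. On its own terms, your plan is sound for algebras of finite height, but it has one real gap. The theorem is stated for an \emph{arbitrary} subdirectly irreducible cBCK-algebra, and such an algebra need not have an atom: the interval $[0,1]$ with $x\ominus y=\max\{0,x-y\}$ is simple, hence subdirectly irreducible, atomless, and of infinite height. Your easy directions survive, because the fact you actually need is the Romanowska--Traczyk criterion that any two nonzero elements of a subdirectly irreducible cBCK-algebra have nonzero meet (the unique atom is a \emph{consequence} of this in the finite case, not the primitive fact). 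But the converse of the height statement, where you take $y$ to be ``the atom,'' collapses exactly in the atomless case, which is a case the theorem must cover since there $\hto(\al A)=\infty>n$. A repair that works uniformly: if $\hto(\al A)>n$, pick $0<c_1<\dots<c_{n+1}$ inside some principal downset (a chain), set $x=c_{n+1}$ and $y=\bigwedge_{k}(c_{k+1}\ominus c_k)$; then $y>0$, and $c_{k+1}\ominus y\geq c_{k+1}\ominus(c_{k+1}\ominus c_k)=c_{k+1}\wedge c_k=c_k$ gives $x\ominus ny\geq c_1>0$, whence $x\ominus(n+1)y<x\ominus ny$ by your strict-decrease observation.

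Second, your width argument is correct but proves something slightly different from what the statement literally says, and you should not smooth this over with ``which is precisely the width bound.'' As you argue, the identity in the $n$ variables $x_1,\dots,x_n$ fails exactly when some $n$-tuple is pairwise incomparable (such elements are automatically distinct and nonzero, and the meet of finitely many nonzero elements is nonzero by subdirect irreducibility); so the identity holds iff there is \emph{no} antichain of size $n$, i.e.\ iff the width is at most $n-1$. Plugging the two maximal elements of a two-pronged tree into $(x_1\ominus x_2)\wedge(x_2\ominus x_1)=0$ already shows the identity with $n=2$ fails although the width is $2$. So either the identity must be taken in $n+1$ variables or the bound must read $\leq n-1$; this is most likely a transcription slip in the quoted statement, but your writeup should flag the discrepancy rather than assert agreement. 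The closing reduction of width to $\vert\meo(\al A)\vert$ under finite height is fine: images of an antichain under ``push up to a maximal element'' stay distinct because each $[0,m]$ is a chain.
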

\end{samepage}

We have seen that the cover cannot contain a subdirectly irreducible algebra of infinite height or infinite width. 
Of course, these restrictions can be made tighter.

\begin{proposition}
    Let $\al A$ be a finite subdirectly irreducible cBCK-algebra, let $\var{K}$ be a cover of $\vag{\al A}$ and let $\al B \in \trop{Si}(\var{K})$. Then
    \begin{enumerate}[label=(\roman*)]
        \item $\vert \hto(\al B)\vert \leq \hto(\al A) +1$,
        \item $\vert \meo(\al B)\vert \leq \vert\meo(\al A)\vert + 1$.
    \end{enumerate}
\end{proposition}
\begin{proof}
The arguments proceed similarly to the proof of Proposition \labelcref{prop: cover is fin gen}.  
Let $\al A$ be a non-trivial finite subdirectly irreducible cBCK-algebra.  
Furthermore, let $\var{K}$ be a variety, and let $\al B \in \trop{Si}(\var{K})$.
    
(i) Suppose that $\vert \hto(\al B)\vert > \hto(\al A) + 1$.  
Then $\al B$ contains a chain $\al S_{\hto(\al A)+2}$ of height $\hto(\al A)+2$.  
Since $\al S_{\hto(\al A)+1} \notin \vag{\al A}$ and $\vag{\al S_{\hto(\al A)+1}} \subsetneq \vag{\al S_{\hto(\al A)+2}}$, it follows that $\vag{\al A} \subsetneq \vag{\al A} \vee \vag{\al S_{\hto(\al A)+1}} \subsetneq \var{K}$. Hence, $\var{K}$ is not a cover.

(ii) If $\al B$ does not have a finite height, then by (i) we know that $\var{K}$ cannot be a cover, and the argument is complete.  
Thus, assume that $\al B$ has a finite height.  
Then its width is exactly the number of elements in $\meo(\al B)$. 
Suppose that $\vert \meo(\al B) \vert > \vert \meo(\al A) \vert + 1$.  
It follows that $\al B$ contains a subalgebra, say $\al C$, such that $\vert \meo(\al C) \vert = \vert \meo(\al A) \vert + 1$; for instance, we may choose a suitable subset of $\meo(\al B)$ to generate $\al C$.  
Once again, $\vag{\al A}$ cannot contain $\al C$, and moreover $\vag{\al C} \subsetneq \var{K}$ (since $\al C$ has a strictly greater width).  
Therefore, $\vag{\al A} \subsetneq \vag{\al A} \vee \vag{\al C} \subsetneq \var{K}$.
\end{proof}

\begin{definition}
    Let $\var{V}$ be a variety of cBCK-algebras. We say that $\var{V}$ is $n$-generated if and only if there is $n$-element set of mutually non-isomorphic subdirectly irreducible algebras of $\var{V}$ that generate $\var{V}$, and if there is no other set of subdirectly irreducible algebras with the same property and smaller cardinality.
\end{definition}
\begin{remark}
If $\var{V}$ is $n$-generated, there can be more than one set of subdirectly irreducible algebras witnessing this fact.  
For example, for the variety of all \L BCK-algebras $\var{L}$ we have
$\var{L} = \vag{\mathbb{Z}^+} = \vag{\al{[0,1]}_\mathbb{R}}$.
Thus, $\var{L}$ is $1$-generated, yet there exist at least two different generating subdirectly irreducible algebras.

However, for finitely generated varieties of cBCK-algebras, this cannot happen. If $\var{V}$ is a finitely $n$-generated variety, then there exists a unique (up to isomorphism) $n$-element set of mutually non-isomorphic subdirectly irreducible algebras generating $\var{V}$.  
Indeed,
$$
\trop{Si}(\var{V})
   = \trop{Si}\bigl(\vag{\al A_1,\dots,\al A_n}\bigr)
   = \trop{Si}(\vag{\al A_1}) \cup \dots \cup \trop{Si}(\vag{\al A_n}),
$$
and this set consists only of (isomorphic copies of) subalgebras of
$\al A_1,\al A_2,\dots,\al A_n$. 
\end{remark}

\begin{lemma}
Let $\var{V}$ be a finitely $n$-generated variety of cBCK-algebras and let $\var{W}$ be its covering. Then $\var{W}$ is at most $(n+1)$-generated.
\end{lemma}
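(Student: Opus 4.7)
The plan is to use the cover property directly, leveraging the Corollary which guarantees that $\var{W}$ is finitely generated. Since $\var{V}$ is finitely $1$-generated, I write $\var{V} = \vag{\al A}$ for a finite subdirectly irreducible cBCK-algebra $\al A$, and recall from the introduction that $\trop{Si}(\var{V}) = \trop{S}(\al A)$ up to isomorphism. By the Corollary, $\var{W}$ is likewise finitely generated; therefore $\trop{Si}(\var{W})$ is finite up to isomorphism, and $\var{W}$ is generated by its (finite, up to iso) set of subdirectly irreducibles.

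The key step is to identify the right extra generator. Since $\var{V} \subsetneq \var{W}$, there must exist some $\al B \in \trop{Si}(\var{W})$ with $\al B \notin \var{V}$, equivalently, $\al B$ not isomorphic to any element of $\trop{S}(\al A)$. Otherwise every subdirectly irreducible generator of $\var{W}$ would belong to $\var{V}$, forcing $\var{W} \subseteq \var{V}$ and contradicting the strict containment. Fix such a $\al B$.

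Finally, I form the join $\vag{\al A} \vee \vag{\al B} = \vag{\al A, \al B}$. It is sandwiched between $\var{V}$ and $\var{W}$: it contains $\al B \notin \var{V}$, hence strictly contains $\var{V}$, while it is contained in $\var{W}$ since both $\al A, \al B \in \var{W}$. The cover hypothesis leaves only the option $\vag{\al A, \al B} = \var{W}$. Because $\al B \not\cong \al A$, the pair $\{\al A, \al B\}$ is a two-element set of mutually non-isomorphic subdirectly irreducibles generating $\var{W}$, proving that $\var{W}$ is at most $2$-generated. I do not foresee a genuine obstacle here; the argument is purely order-theoretic in the subvariety lattice, and relies only on the Corollary and on the identification $\trop{Si}(\vag{\al A}) = \trop{S}(\al A)$ already recalled in the introduction.
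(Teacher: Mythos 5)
Your proof is correct, and it takes a genuinely different (and in fact cleaner) route than the paper. The paper argues by contradiction: it assumes $\var{W}$ is $n$-generated with $n\geq 3$, writes $\var{W}=\vag{\al B_1,\dots,\al B_n}$, locates the generator $\al B_i$ with $\al A\subseteq \al B_i$, and squeezes the $2$-generated variety $\vag{\al B_i,\al B_j}$ strictly between $\var{V}$ and $\var{W}$, using minimality of the generating set to get both strict inclusions. You instead argue directly: pick any $\al B\in\trop{Si}(\var{W})$ outside $\var{V}$ (which exists since every variety is generated by its subdirectly irreducible members), note $\var{V}\subsetneq\vag{\al A,\al B}\subseteq\var{W}$, and let the cover hypothesis force equality. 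This avoids the contradiction setup and the appeal to minimality of the generating set, and it yields the slightly stronger conclusion $\var{W}=\vag{\al A}\vee\vag{\al B}$ for a single new subdirectly irreducible $\al B$ --- which is exactly the form used later in the paper's main theorem, so your version arguably streamlines the subsequent case analysis. One cosmetic remark: your invocation of the Corollary (finite generation of the cover) is not actually needed for the existence of $\al B$, since the fact that $\trop{Si}(\var{W})$ generates $\var{W}$ holds for any variety; it only matters if one insists that the witnessing generators be finite algebras.
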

\begin{proof}
Let $\var{V}$ be $n$-generated, i.e., $\var{V} = \vag{\al A_1, \dots, \al A_n}$, where $\trop{IS}(\al A_i) \not\subseteq \trop{IS}(\al A_j)$ for any two distinct $i,j \in \{1, \dots, n\}$. Similarly, let $\var{W}$ be at least $m$-generated, with $m \geq n+2$. Thus, $\var{W} = \vag{\al B_1, \dots, \al B_m}$, where $\trop{IS}(\al B_i) \not\subseteq \trop{IS}(\al B_j)$ for any two distinct $i,j \in \{1, \dots, m\}$. Furthermore, assume that $\var{V} \subseteq \var{W}$. We will show that $\var{W}$ cannot be a cover.  

It follows that
$$
\bigcup_{i=1}^n \trop{IS}(\al A_i) \subseteq \bigcup_{j = 1}^{m} \trop{IS}(\al B_j)\text{.}
$$
For each $\al A_i$, there exists at least one $\al B_j$ such that $\al A_i \in \trop{IS}(\al B_j)$. From the set of $\al B_j$, $j\in \{1, \dots, m\}$, we can choose a subset of elements $\al B_k$, $k \in K$, with $|K| = n$, such that for every $\al A_i$ there is at least one $\al B_k$ for which $\al A_i \in \trop{IS}(\al B_k)$.
Indeed, for each $\al A_i$ we pick just one $\al B_j$ from the set of all possible $\al B_j$ satisfying this condition. This gives a mapping from an $n$-element set, so the image contains at most $n$ elements.  

Furthermore, since $m \geq n+2$, there exist $\al B_{j_1}$ and $\al B_{j_2}$ such that $j_1 \neq j_2$ and $j_1 \notin K$, $j_2 \notin K$. Then
$$
\var{V} \subseteq \vag{\al B_k \mid k \in K} 
\subsetneq \vag{\{\al B_k \mid k \in K\} \cup \{\al B_{j_1}\}} 
\subsetneq \vag{\{\al B_k \mid k \in K\} \cup \{\al B_{j_1}, \al B_{j_2}\}} 
\subseteq \var{W}\text{.}
$$
Hence, $\var{W}$ is not a cover.
\end{proof}

We have bounds on the number of irredundant generators for covers. Next, we describe more precisely the elements that such a set may contain.

\begin{lemma}\label{lem: gen of cover}
    Let $\var{V}= \vag{\al A_1, \dots, \al A_n}$ be a finitely $n$-generated variety and let $\var{K} = \vag{\al B_1, \dots , \al B_m}$ be a finitely $m$-generated variety such that $\var{V} \subsetneq \var{K}$. Let $C$ be a subset of the set $\{\al B_1, \dots , \al B_m\}$ such that $\al B_i \in C$ iff for all $j \in \{1, \dots,n \}$ it holds $\al B_i \not\cong \al A_j$.
    If $\vert C \vert >1$, then $\var{K}$ is not a cover.
\end{lemma}
\begin{proof}
Let $\al B_{i_1}$, $\al B_{i_2} \in C$ be distinct. Let $\trop{Prop}(\al B_i)$ denote a set containing all $\al A_j$, $j \in \{1, \dots, n\}$, for which $\al A_j$ is embeddable to $\al B_i$ only as a proper subalgebra, i.e.,
$$\trop{Prop}(\al B_i)= \{\al A \in \{\al A_1, \dots, \al A_n\} \mid  \al A \in \trop{IS}(\al B_i), \, \al A \not\cong \al B_i\}.$$
If $\trop{Prop}(\al B_{i_1}) = \trop{Prop}(\al B_{i_2})= \emptyset$. Then, since $\al B_{i_1}$, $\al B_{i_2} \in C$, we have
$$\var{V} \subsetneq \vag{\al B_i \mid i \in \{1, \dots, n\}, \, i \neq i_1} \subsetneq \var{K}.$$
Thus $\var{K}$ is not a cover.

Without loss of generality, assume now that $\trop{Prop}(\al B_{i_1}) \neq \emptyset$. Then
$$\var{V} \subsetneq \vag{\{\al B_i \mid i \in \{1, \dots, n\}, \, i \neq i_1\} \cup \trop{Prop}(\al B_1)} \subsetneq \var{K}.$$
That yields once again that $\var{K}$ is not a cover.
\end{proof}

In the following statement, the set $\trop{Prop}(\al B)$ has the same meaning as in the proof above.

\begin{proposition}\label{prop: cover has a form}
Let $\var{V} = \vag{\al A_1, \dots, \al A_n}$ be a finitely $n$-generated variety, and let $\var{K}$ be a cover of $\var{V}$.  
Then there exists $\al B \in \trop{Si}(\var{K})$ such that  
$\bigl(\{\al A_1, \dots, \al A_n\} \setminus \trop{Prop}(\al B)\bigr) \cup \{\al B\}$  
is an irredundant set of generators for $\var{K}$.  
Thus
$$
   \var{K}
   = \vag{\bigl(\{\al A_1, \dots, \al A_n\}\setminus \trop{Prop}(\al B)\bigr) \cup \{\al B\}}
$$
is $m$-generated, where $m = n + 1 - \lvert \trop{Prop}(\al B) \rvert$.
\end{proposition}

\begin{proof}
Let $\var{K}$ be a cover of $\var{V}$.  
By Lemma~\labelcref{lem: gen of cover}, there is exactly one algebra $\al B$ in an irredundant generating set of $\var{K}$ that is not isomorphic to any of $\al A_1, \dots, \al A_n$.  If $\trop{Prop}(\al B) = \emptyset$, then the set $\{\al A_1, \dots, \al A_n, \al B\}$ is an irredundant set of generators for $\var{K}$, because no $\al A_i$ lies in $\trop{IS}(\al B)$. If $\trop{Prop}(\al B) \neq \emptyset$, then the generating set for $\var{K}$ is  
$\bigl(\{\al A_1, \dots, \al A_n\}\setminus \trop{Prop}(\al B)\bigr) \cup \{\al B\}$,  
which is clearly irredundant.
\end{proof}

We now move on to the first step in the construction of the covers. Let $\al A$ be a finite subdirectly irreducible cBCK-algebra.
For any $\al{B} \in \trop{S}(\al{A})$ and any $a \in \al{B}$ with $a \neq 0$, we define a tree $\al{B}_a$ as follows:

\begin{enumerate}
    \item Let $c \notin B$ be a new element and $B_a := B \cup \{c\}$.
    \item Put $a \prec c$.
\end{enumerate}

As stated in Corollary~\labelcref{cor: trees are cbck}, there is (up to isomorphism) a unique cBCK-algebra\footnote{We use the same symbol for the algebra because, within the isomorphism class, we can choose a representative with the identical underlying set and tree structure.} $\al B_a$ whose underlying semilattice is this tree.  
What we have just constructed is a new subdirectly irreducible algebra that depends on the choice of $a \in A$ and differs from the original by the single additional element $c$ attached directly to $a$.

Although it is not required for the subsequent arguments, we can describe explicitly how the operations of the new algebra $\al B_a$ extend those of the original algebra.  
For completeness, we provide this description in the next paragraph.

Denote $\al B = (B, \ominus, 0)$ and $\al B_a = (B_a, \ominus_a, 0)$.  The operation $\ominus_a$ coincides with $\ominus$ in $B \times B$.  To extend it to $B_a$, we first put $c \ominus_a c := 0$. Next, let $d$ be an atom of $\al B$.  
For any $b \in B$, define
\begin{align*}
b \ominus_a c &:= b \ominus a,\\
c \ominus_a b &:= a \ominus \bigl(\hto_B(a \wedge b) - 1\bigr)d.
\end{align*}

To see that the above is indeed a well-defined cBCK-algebra, let us  state the following lemma.
\begin{lemma}
Let $\al A$ be a finite subdirectly irreducible cBCK-algebra with atom $a$.
For any $b$, $c \in A$,
$$
b \ominus c = b \ominus \hto(b \wedge c)\, a.
$$
\end{lemma}

\begin{proof}
Both $b \ominus c$ and $b \ominus \hto(b \wedge c) a$ lie in the finite chain $[0,b]$, where each element is uniquely determined by its height.  
It therefore suffices to show that these two elements have the same height.
By repeated use of Lemma~\labelcref{lem: height and minus},
\begin{align*}
\hto\bigl(b \ominus \hto(b \wedge c)\, a\bigr)
  &= \hto(b) - \hto(b \wedge c)  \\
  &= \hto\bigl(b \ominus (b \wedge c)\bigr) \\
  &= \hto(b \ominus c).
\end{align*}
Since they have equal height, the two elements are equal.
\end{proof}

With the aid of the above lemma, verifying that $\al B_a$ satisfies the identities \labelcref{id: BCK2}, \labelcref{id: BCK3}, \labelcref{id: commutativity}, and \labelcref{id: BCKex} is straightforward.  
Consequently, $\al B_a$ is a well-defined cBCK-algebra, and we omit the details of this routine computation.

\vspace{\baselineskip}
We now proceed to the second step in the construction of the covers.
Let $\var{V} = \vag{\al A_1, \dots, \al A_n}$ be a finitely $n$-generated variety of cBCK-algebras. For every $\al A_i$, $i \in \{1, \dots, n\}$, consider $\al B \in \trop S(\al A_i)$ and constructed $\al B_a$, where $a \in A_i$, $a \neq 0$. Furthermore, we denote
$$
\trop S(\al B_a)(c) = \{\al D \in \trop S(\al B_a) \mid c \in D\}.
$$

We are interested in the following set:
$$
\{\al C \in \trop S(\al B_a)(c) \mid \al C \notin \trop{IS}(\al A_i)\text{, for any } i \in \{1,\dots, n\}\}.
$$
This set may be empty for some choice of $\al B$ and $a \in A_i$. However, if it is not empty, it can be regarded as a subposet of the lattice of subalgebras $\trop S(\al B_a)$. This subposet does not necessarily have the smallest element, but it does have some minimal elements. Let us denote them as
$$
C^i_{\al B_a} = \min\{\al C \in \trop S(\al B_a)(c) \mid \al C \notin \trop{IS}(\al A_i)\text{, for any } i \in \{1,\dots, n\}\}.
$$

In the following, we will prove that these minimal elements are in the generating sets of covers. To ensure that we account for all possibilities and thus obtain all covers, we consider the set
$$
\trop{Cov}(\var{V}) = \bigcup \{ C^i_{\al B_a} \mid i \in \{1, \dots, n\}, \, \al B \in \trop S(\al A_i),\, a \in A_i \}.
$$

\begin{proposition}\label{prop: cons gen cov}
Let $\var{V} = \vag{\al A_1, \dots, \al A_n}$ be a finitely $n$-generated variety of cBCK-algebras.  
For any $\al C \in \trop{Cov}(\var{V})$, the variety $\var{V} \vee \vag{\al C}$ is a cover of $\var{V}$.
\end{proposition}

\begin{proof}
Let $\var{V} = \vag{\al A_1, \dots, \al A_n}$ be a finitely $n$-generated variety of cBCK-algebras.  
Let $\var{K}$ be a cover of $\var{V}$ such that $\var{V} \subsetneq \var{K} \subseteq \var{V} \vee \vag{\al C}$.  
It follows that
\begin{align*}
    \bigcup_{i=1}^n \trop{IS}(\al A_i) \subsetneq \trop{Si}(\var{K}) \subseteq \bigcup_{i=1}^n \trop{IS}(\al A_i) \cup \trop{IS}(\al C).
\end{align*}
From this we conclude that for any $\al K \in \trop{Si}(\var{K})$, either $\al K \in \trop{IS}(\al A_i)$ for some $i$, or $\al K \notin \trop{IS}(\al A_i)$ for all $i \in \{1, \dots, n\}$.

Suppose that $\al K \notin \trop{IS}(\al A_i)$ for any $i$.  
Then $\al K \in \trop{IS}(\al C)$. Without loss of generality, assume that $\al K \in \trop{S}(\al C)$.
Since $\al C \in \trop{Cov}(\var{V})$, we have $\al C \in \trop{S}(\al B_a)(c)$ and $\al C \notin \trop{IS}(\al A_i)$ for any $i$.  
There are two possibilities: Either $c \in \al K$, or $c \notin \al K$.

If $c \in \al K$, then necessarily $\al K = \al C$, because both $\al K$ and $\al C$ satisfy the condition of not being in any $\trop{IS}(\al A_i)$, and $\al C$ is defined as minimal with this property.

If $c \notin \al K$, then from the construction of $\al C$ we have $\al C = C' \cup \{c\}$.  
Moreover, $C'$ is the universe of a subalgebra of $\al C$, and $\al K \subseteq \al C'$.  
But $\al C' \in \trop{IS}(\al A_i)$ for some $i$, so $\al K \in \trop{IS}(\al A_i)$ as well, which contradicts the assumption that $\al K \notin \trop{IS}(\al A_i)$ for any $i$.

Consequently, the only possibility is that $c \in \al K$, in which case $\al K = \al C$.  
Hence, every $\al K \in \trop{Si}(\var{K})$ satisfies either $\al K \in \trop{IS}(\al A_i)$ for some $i$ or $\al K \cong \al C$.  
Thus, $\var{K} = \vag{\al A_1, \dots, \al A_n, \al C}$.
\end{proof}

We have shown that the given construction indeed produces a cover.  
However, the crucial question is the converse. Does every cover arise from this construction?  
The answer is affirmative, and this forms the main theorem of the paper.

\begin{theorem}[Main Theorem]
Let $\var{V} = \vag{\al A_1, \dots, \al A_n}$ be a finitely $n$-generated variety of cBCK-algebras.  
Let $\var{K}$ be a cover of $\var{V}$.  
Then there exists $\al C \in \trop{Cov}(\var{V})$ such that $\var{K} = \var{V} \vee \vag{\al C}$.
\end{theorem}

\begin{proof}
By Proposition \labelcref{prop: cover has a form}, 
$\var{K} = \vag{\big(\{\al A_1, \dots, \al A_n\}\setminus \trop{Prop}(\al B)}\big) \cup \{\al B\}$ 
for some $\al B \in \trop{Si}(\var{K})$, and this set of generators is irredundant.

Let $c \in \meo(\al B)$. Then $B \setminus \{c\}$ is the universe of a subalgebra of $\al B$, since it forms a downset of $\al B$. 
Assume that $\al{B \setminus \{c\}} \notin \trop{IS}(\al A_i)$ for any $i \in \{1, \dots, n\}$. 
Then $\al{B \setminus \{c\}} \subsetneq \al B$ and
$$
\var{V} \subsetneq \vag{\al A_1, \dots, \al A_n, \al{B \setminus \{c\}}} \subsetneq \var{K},
$$
because $\al B$ is not contained in $\vag{\al A_1, \dots, \al A_n, \al{B \setminus \{c\}}}$. 
This contradicts the assumption that $\var{K}$ is a cover. 
Hence, the opposite must hold.

Let $B \setminus\{c\} \in \trop{IS}(\al A_i)$ for some $i$. 
Then $\al B$ can be viewed as $\al C_a$ constructed from $\al C \cong \al{B \setminus\{c\}}$, 
where $\al C \in \trop{S}(\al A_i)$. 
Thus, we have $\al B \cong \al C_a$ with $C_a = C \cup \{d\}$, 
where $d$ is a new element attached to $a$. 
Since $\al B \notin \trop{IS}(\al A_i)$ for any $i$, the same applies to $\al C_a$. We claim that $\al C_a$ is minimal with this property and containing $d$. 
Indeed, if $\al C_a$ was not minimal, there would exist $\al D \subsetneq \al C_a$ such that
$$
\var{V} \subsetneq \vag{\al A_1, \dots, \al A_n, \al D} \subsetneq \var{K},
$$
which contradicts the fact that $\var{K}$ is a cover.

Since $\al C_a$ is minimal, we have $\al C_a \in \trop{Cov}(\var{V})$, 
which, together with $\al B \cong \al C_a$, gives
{\setlength{\belowdisplayskip}{0pt}
$$
\var{K} = \vag{\al A_1, \dots, \al A_n, \al C_a} = \var{V} \vee \vag{\al C_a}.
$$}
\end{proof}

Let us summarise the results of Proposition \labelcref{prop: cons gen cov} and the main theorem in the following corollary.

\begin{corollary}
Let $\var{V} = \vag{\al A_1, \dots, \al A_n}$ be a finitely $n$-generated variety of cBCK-algebras, 
and let $\var{K}$ be a variety of cBCK-algebras. 
Then $\var{K}$ is a cover of $\var{V}$ if and only if 
$\var{K} = \var{V} \vee \vag{\al C}$ for some $\al C \in \trop{Cov}(\var{V})$.
\end{corollary}

From a practical perspective, we should note that if we have, say, two $1$-generated varieties $\vag{\al A_1}$ and $\vag{\al A_2}$, 
then, if we find a cover $\var{K}$ of $\vag{\al A_1}$ and this cover does not coincide with $\vag{\al A_2}$, 
it follows from the congruence distributivity that $\vag{\al A_2} \vee \var{K}$ is a cover of $\vag{\al A_1, \al A_2}$.

In other words, many covers of a finitely $n$-generated variety $\vag{\al A_1, \dots, \al A_n}$ can be obtained by first considering the covers of $\vag{\al A_i}$ for $i \in \{1, \dots, n\}$. 
However, it should be noted that this does not necessarily produce all the covers of $\vag{\al A_1, \dots, \al A_n}$.

\begin{remark}\label{rem: last}
In the first section, we described subalgebras of finite subdirectly irreducibles. In some cases, we may employ the findings to make the construction of cover more effective. Let us consider $1$-generated variety $\vag{\al A}$. In the first step of the construction we choose arbitrary $\al B \in \trop{S}(\al A)$. However, we claim that it is enough to consider $\al B \in \trop S_\delta(\trop S_d(\al A)) \cup \{\al A\}$ to obtain all the covers. The reason is that if $\al B \in \trop{S}_d(\al A)$. Then for any $a \in A$, $a \neq 0$, the following holds
\begin{align*}
    C_{\al B_a} = \min\{\al C \in \trop S(\al B_a)(c) \mid \al C \notin \trop{IS}(\al A)\} \subseteq C_{\al A_a} = \min\{\al C \in \trop S(\al A_a)(c) \mid \al C \notin \trop{IS}(\al A)\}.
\end{align*}
Indeed, since $\trop{S}(\al B_a)(c) \subseteq \trop{S}(\al A_a)(c)$, if $\al C \in C_{\al B_a}$ then $\al C \in \trop{S}(\al A_a)$.

There may be special cases for which 
$\trop S_{\delta}(\trop S_d (\al A)) \subseteq \trop{IS}_d(\al A)$; see Theorem \labelcref{thm: sub of A}. 
In such cases, the first step of the construction may proceed simply by taking $\al B = \al A$.
\end{remark}

Let us finish the article by applying the general theorem to specific examples.
Before presenting these examples, we first introduce a notion useful for describing more complex cBCK-algebras.

Let $P = p_1, \dots, p_k$ be a list of natural numbers with $k \geq 2$. 
We denote by $\trop M_P(\al S_n)$ the cBCK-algebra obtained by gluing $S_{p_1}, \dots, S_{p_k}$ on top of $\al S_n$ (see Figure \labelcref{fig: MP}). 
The operations are defined so that for any $i \in \{1, \dots, k\}$, the maximal chain $\al{[0, m_i]}$ is isomorphic to $\al S_{n+p_i}$. 
In the special case where $p_1 = p_2 = \dots = p_k = 1$, we abbreviate $\trop M_P(\al S_n)$ as $\trop M_k(\al S_n)$. 
For example, $\trop M_2(\al S_n)$ is a cBCK-algebra with two maximal chains, each isomorphic to $\al S_{n+1}$.

\begin{figure}[t]
    \centering
    \includegraphics[height= .2\textheight]{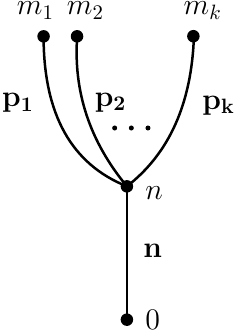}
    \caption{The visualization of $\trop M_P(\al S_n)$. Bold letters refer to lengths of intervals.}
    \label{fig: MP}
\end{figure}

\begin{example}\label{exmp: Sn}
Let $\al A = \al S_n$. Then $\trop S_\delta(\trop S_d(\al A)) \subseteq \trop{IS}_d(\al A)$. 
Therefore, based on the arguments in the previous remark, it is sufficient to consider $\al B = \al A$ in the construction of the cover. 
Hence, all the covers of $\vag{\al S_n}$ are:
\begin{itemize}
    \item $\vag{\al{S}_{n+1}}$,
    \item $\vag{\al S_n} \vee \vag{\trop M_2(\al S_m)}$, $1 \leq m \leq n-1$.
\end{itemize}
\end{example}

\begin{example}
Let $\al A = \trop M_P(\al S_q)$, where $P = k \times p$ (i.e., $P$ is the list of length $k$ consisting of $p$ repeated $k$ times). 
Further, let $p$ and $q$ be such that for any $p' \in \{1, \dots, p\}$, $p'$ and $q$ are coprime.\footnote{This condition forces $p < q$.} 
Then, again, $\trop S_\delta(\trop S_d(\al A)) \subseteq \trop{IS}_d(\al A)$, 
and one may easily check that all the covers of $\vag{\trop M_P(\al S_q)}$ are:
\begin{itemize}
    \item $\vag{\trop M_P(\al S_q)} \vee \vag{\trop M_{k+1}(\al S_q)}$,
    \item $\vag{\trop M_P(\al S_q)} \vee \vag{\al S_{q+p+1}}$,
    \item $\vag{\trop M_P(\al S_q)} \vee \vag{\trop M_2(\al S_m)}$, $1 \leq m \leq q-1$ or $q+1 \leq m \leq q+p-1$.
\end{itemize}
This includes, in particular, the case $\trop{M_k}(\al S_n)$ for $n, k \in \mathbb{N}$, $k \geq 2$. 
Example \labelcref{exmp: Sn} and the case $\al A = \trop{M_k}(\al S_n)$ were previously illustrated in unpublished notes by Jan Kühr and Petr Ševčík. 
However, they only demonstrate what the covers are without presenting any general method to find them.

\end{example}

\section{Conclusion}

In this article, we have described a method for constructing any cover of a finitely $n$-generated variety of cBCK-algebras. Consequently, this provides a way to understand the lower part of the lattice of subvarieties of cBCK-algebras, which may be relevant for future research. Hereditary simplicity is the key to making this construction work. Although this property is quite special, it is natural to ask whether a similar construction could be applied in settings beyond cBCK-algebras.

For instance, it is known that some subvarieties of effect algebras also possess this property; see \cite{JendaEf} for more details. In this context, simple algebras are horizontal sums of MV-algebras. Although horizontal sums of MV-chains are not trees, it may still be possible to consider constructions that extend these algebras in an analogous manner.

Several recent works on BCK-algebras address the structure of subvariety lattices; 
for example, \cite{Agliano} investigates splittings in a wide range of algebraic structures related to BCK-algebras or derived from them, 
though not specifically in commutative BCK-algebras. 
To our knowledge, the only explicit result in this setting is due to Romanowska \cite{Romanowska}, 
who proved that any variety generated by a finite chain is splitting in the subvariety lattice of all cBCK-algebras. 
Beyond this, the structure of splitting varieties within commutative BCK-algebras remains incomplete. 

The aim of this article has been to describe the covers of finitely generated varieties of cBCK-algebras. 
These results may provide useful insights for identifying join-irreducible elements near the bottom of the lattice, 
which in turn could serve as candidates for splitting algebras. 
Further examination of this approach could be considered.

In addition to theoretical considerations, the construction of covers can also be approached from an algorithmic perspective.
In particular, one could envision a program that computes all the covers (i.e., their irredundant generators) for a given finitely $n$-generated variety. 
Such a tool would be useful for studying the lattice of subvarieties, and Remark \labelcref{rem: last} together with Theorem \labelcref{thm: sub of A} provides natural strategies for optimising such an implementation.

\nocite{CornishBCK}


\begin{thebibliography}{9}

\bibitem{Agliano}
\uppercase{Aglian\`o, P.}: 
\textit{Splittings in Subreducts of Hoops.}
Studia Logica,
\textbf{110} (2022), 1155--1187.

\bibitem{BlokRaf}
\uppercase{Blok, W. J.---Raftery, J. G.:}
\textit{On the quasivariety of BCK-algebras and its subvarieties.}
Algebra Universalis,
\textbf{33} (1995), 68--90.

\bibitem{JendaEf}
\uppercase{Chajda, I.---K\" uhr, J.}: 
\textit{Finitely generated varieties of distributive effect algebras.}
Algebra Universalis,
\textbf{69} (2013), 213--229.

\bibitem{CignoliMV}
\uppercase{Cignoli, R. L. O.---D'OTTAVIANO, I. M. L.---MUNDICI, D.}:
\textit{Algebraic Foundations of Many-Valued Reasoning},
Trends in Logic, Studia Logica Library, Vol. 7,
Kluwer Academic, Dordrecht, 2000.

\bibitem{CornishBCK}
\uppercase{Cornish, W. H.}: 
\textit{Varieties generated by finite BCK-algebras},
Bulletin of the Australian Mathematical Society, \textbf{22} (1980), 411--430.

\bibitem{CornishTrees}
\uppercase{Cornish, W. H.}: 
\textit{Trees as commutative BCK-algebras},
Bulletin of the Australian Mathematical Society, \textbf{23} (1981), 181--190.

\bibitem{DvurBCK}
\uppercase{Dvure\v{c}enskij, A.---Pulmannov\'a, S.}:
\textit{New Trends in Quantum Structures},
Springer Netherlands, Dordrecht, 2000.

\bibitem{Isek}
\uppercase{Imai, Y.---Is\'eki, K.}: 
\textit{On axiom systems of propositional calculi, XIV},
Proceedings of the Japan Academy,
\textbf{42} (1966), 19--22.

\bibitem{Komori}
\uppercase{Komori, Y.}:
\textit{Super-Łukasiewicz Implicational Logics.}
Nagoya Mathematical Journal,
\textbf{72} (1978 )127--33.

\bibitem{PalRom}
\uppercase{Palasi\'nski, M.---Romanowska, A.}: 
\textit{Varieties of Commutative BCK-algebras not Generated by their Finite Members},
Demonstratio Mathematica,
\textbf{18} (1985), 499--508.

\bibitem{Pal}
\uppercase{Palasi\'nski, M.}: 
\textit{The variety of all commutative BCK-algebras is generated by its finite members as a quasivariety},
Demonstratio Mathematica,
\textbf{45} (2012), 495--518.

\bibitem{Romanowska}
\uppercase{Romanowska, A.---Traczyk, T.}: 
\textit{Commutative BCK-algebras. Subdirectly irreducible algebras and varieties},
Mathematica Japonica,
\textbf{27} (1982), 33--48.
\end{thebibliography}
\end{document}